\newtheorem{theorem}{Theorem}[section]
\newtheorem{proposition}[theorem]{Proposition}
\newtheorem{lemma}[theorem]{Lemma}
\newtheorem{corollary}[theorem]{Corollary}
\newtheorem{definition}[theorem]{Definition}
\newcommand{\N}{\mathbb{N}}
\newcommand{\R}{\mathbb{R}}
\newcommand{\eps}{\varepsilon}
\newcommand{\wt}[1]{\widetilde{#1}}
\DeclareMathOperator{\sgn}{sgn}
\title{Rates of metastability for iterations on the unit interval}
\author{Andrei Sipo\c s${}^{a,b}$\\[2mm]
\footnotesize ${}^a$Research Center for Logic, Optimization and Security (LOS), Department of Computer Science,\\
\footnotesize Faculty of Mathematics and Computer Science, University of Bucharest,\\
\footnotesize Academiei 14, 010014 Bucharest, Romania\\[1mm]
\footnotesize ${}^b$Simion Stoilow Institute of Mathematics of the Romanian Academy,\\
\footnotesize Calea Grivi\c tei 21, 010702 Bucharest, Romania\\[2mm]
\footnotesize E-mail: andrei.sipos@fmi.unibuc.ro\\
}
\date{}
\begin{document}

\maketitle

\begin{abstract}
We use techniques of proof mining to extract computable and uniform rates of metastability (in the sense of Tao) for iterations of continuous functions on the unit interval, firstly (following earlier work of Gaspar) out of convergence proofs due to Franks, Marzec, Rhoades and Hillam and then out of an argument due to Borwein and Borwein that pertains only to Lipschitz functions.

\noindent {\em Mathematics Subject Classification 2010}: 47H05, 47H09, 47J25, 03F10.

\noindent {\em Keywords:} Proof mining, Krasnoselski-Mann iteration, Ishikawa iteration, rate of convergence, rate of metastability.
\end{abstract}

\section{Introduction}

In 1953, Mann \cite{Man53} introduced a new iterative process based on lower triangular matrices with the goal of adapting classical techniques like Ces\`aro summation to processes of successive approximation e.g. in Banach spaces. Later (see \cite{OutGro69,Dot70}), it became clear that a special role is played by the particular case of ``segmenting'' or ``normal'' iterations, i.e. sequences $(x_n)$ where, given a self-mapping $f$ and a parameter sequence $(t_n) \subseteq [0,1]$, for all $n$,
$$x_{n+1}=(1-t_n)x_n + t_nf(x_n).$$
Where the parameter sequence is constant, such iterations were also independently studied by Krasnoselski in 1955 \cite{Kra55} (and later also by Schaefer \cite{Sch57} and Browder and Petryshyn \cite{BroPet66}) and they are thus called (in the general, non-constant case) {\it Krasnoselski-Mann iterations}.

In the 1970s, people started to be interested in obtaining results on such iterations on the unit interval $[0,1]$. In 1974, Bailey \cite{Bai74} showed that Krasnoselski's theorem on finding a fixed point of a nonexpansive self-mapping of a compact set of a uniformly convex Banach space can be proven in an elementary way when the set is the unit interval, and then Hillam \cite{Hil75} generalized this result to Lipschitz self-mappings. As we said, in this case, the parameter sequence is constant and thus at each step the same operation is applied to the sequence, i.e. one essentially deals with Picard iterates. This led to Hillam's 1976 result \cite{Hil76} which showed that the convergence of Picard iterates on the unit interval for arbitrary continuous functions is equivalent to asymptotic regularity (a concept introduced in \cite{BroPet66}), i.e. to the fact that $d(x_n,x_{n+1}) \to 0$.

General continuous functions on the unit interval had already been considered starting with Franks and Marzec in 1971 \cite{FraMar71}, who proved a convergence theorem for the Krasnoselski-Mann iterates associated to such a function in the case where for all $n$, $t_n=1/(n+1)$, a result which Rhoades \cite{Rho74a} would generalize to more general $(t_n)$ and then in \cite{Rho76} to the case of {\it Ishikawa iterations}. (Shiro Ishikawa had introduced in 1974 \cite{Ish74} these iterations in order to overcome the shortcomings of the Krasnoselski-Mann process when dealing with the class of Lipschitz pseudocontractions; later, Chidume and Mutangadura \cite{ChiMut01} illustrated their necessity by exhibiting an example of a Lipschitzian pseudocontractive self-mapping of the two-dimensional disc with a unique fixed point for which no Krasnoselski-Mann sequence converges.)

Our work in this area serves primarily as a case study in {\it proof mining}, an applied subfield of mathematical logic that aims to analyse proofs in ordinary mathematics in order to obtain additional information which is not immediately apparent. Proof mining in its current form has been developed in the last twenty years primarily by Ulrich Kohlenbach and his collaborators (see \cite{Koh08} for a comprehensive monograph; a recent survey which serves as a short and accessible introduction is \cite{Koh19}), and one of its main achievements consists in the so-called `general logical metatheorems' that guarantee under certain conditions the extractability of information of a quantitative nature which we shall now detail.

Let $l \in \R$ and $(x_n)$ be a sequence of real numbers. We say that $(x_n)$ {\it tends towards $l$ with rate of convergence $\beta : (0, \infty) \to \N$} if for all $\delta >0$ and all $n \geq \beta(\delta)$, we have $|x_n -l| \leq \delta$. This is clearly a quantitative variant of convergence where $\beta(\delta)$ is an upper bound for the point of convergence $N$ in the usual formulation. One could also produce a similar definition for a rate corresponding to the Cauchy property of a sequence, and then easily compute one rate from the other.

Ideally, one would want to obtain uniform and computable rates of convergence for iterations, but even for simple cases like monotone sequences in the unit interval where one has a uniform {\it proof} of convergence, one cannot possibly have a uniform {\it rate} since it is almost immediate that convergence may be {\it arbitrarily slow}. We also mention the related phenomenon of {\it Specker sequences} \cite{Spe49}: convergent and computable sequences of reals having no computable rate of convergence (see \cite[Theorem 4.4.2]{Koh19w} for an example adequate to the case at hand; one may however have computable rates of convergence under more restrictive assumptions like the existence of a modulus of uniqueness, see \cite{Lam05}).

The next best thing is then a finitary notion of convergence, introduced by Terence Tao in \cite{Tao08} (and used successfully in his proof of the convergence of multiple ergodic averages \cite{Tao08A}), usually called {\it metastability} (under a suggestion of Jennifer Chayes), which is formulated as follows for a given sequence of reals $(x_n)$:
$$\forall \eps>0 \,\forall g:\N\to\N \,\exists N\in \N \,\forall i,j\in [N,N+g(N)] \ \left(| x_i-x_j| \leq\eps\right),$$
a property which is easily (but non-constructively) seen to be equivalent to $(x_n)$ being Cauchy. Because of its reduced logical complexity, the metatheorems mentioned above make it possible to extract a computable and uniform {\it rate of metastability} -- a bound $\Theta(\eps,g)$ on the $N$ in the sentence above -- from any proof that shows the convergence of a given class of sequences and that may be formalized in one of the logical systems for which such metatheorems have so far been developed (one may however need to add certain constants to the system which will then manifest as additional parameters of the rate).

Tao's resulting finite monotone convergence principle may be found in Section~\ref{sec:prelim}, along with other preliminary notions. In addition to that, a significant number of rates of metastability have already been extracted out of a variety of convergence proofs in nonlinear analysis, ergodic theory and convex optimization (see again \cite{Koh19}). In the case of iterations on the unit interval, the most notable work so far has been the one of Jaime Gaspar in his 2011 PhD thesis \cite{Gas11}. There, he analyzed Hillam's equivalence result in \cite{Hil76} mentioned above in order to derive a rate of metastability for the sequence $(x_n)$ conditional on a rate of convergence towards $0$ for the sequence $d(x_n,x_{n+1})$. His main achievement was to fit the original proof of Hillam into a system of lower logical strength by replacing the use of the Bolzano-Weierstrass theorem with that of the infinite pigeonhole principle, thus resulting in a rate of low computational complexity.

In Section~\ref{sec:cont}, we build on Gaspar's work in order to obtain rates of metastability for the Mann (Theorem~\ref{mann}) and Ishikawa (Theorem~\ref{ishikawa}) iterations of arbitrary continuous functions in the unit interval. In the special case of $t_n=1/(n+1)$ due to Franks and Marzec, one has a rate of metastability which is unconditional, in the sense that it depends in addition to $\eps$ and $g$ only on a modulus of uniform continuity for the self-mapping (to be defined in the next section), and not on a modulus corresponding to some post hoc property of the iterative sequence.

In the above, we use throughout the formulation of Park \cite{Par81}, who provided a unifying framework that encompasses all the above-mentioned results. There is one outlier, though, namely the generalization of Hillam's earlier results in \cite{Hil75} on Lipschitz self-mappings due to Borwein and Borwein \cite{BorBor91}, whose proof contains a kind of argument that has never been analyzed so far using the tools of proof mining. This one we treat in Section~\ref{sec:lip}.

\section{Preliminaries}\label{sec:prelim}

After introducing some notations, we shall be in position to present Tao's finitary analysis of the monotone convergence principle as an illustrating example of obtaining a rate of metastability which will also be useful later.

For all $g: \N \to \N$, we define $\wt{g} : \N \to \N$, for all $n$, by $\wt{g}(n):=n+g(n)$. Also, for all $f:\N \to \N$ and all $n \in \N$, we denote by $f^{(n)}$ the $n$-fold composition of $f$ with itself. Note that for all $g$ and $n$, $\wt{g}^{(n)}(0)\leq\wt{g}^{(n+1)}(0)$.

\begin{proposition}[Finite Monotone Convergence Principle {\cite{Tao08}}]\label{fmcp}
Let $\eps>0$, $g:\N \to \N$. Let $(a_i)_{i=0}^{\wt{g}^{\left(\left\lceil\frac1\eps\right\rceil + 1\right)}(0)}$ be a finite monotone sequence in $[0,1]$. Then there is an $N \leq \wt{g}^{\left(\left\lceil\frac1\eps\right\rceil \right)}(0)$ with $N+g(N) \leq\wt{g}^{\left(\left\lceil\frac1\eps\right\rceil + 1\right)}(0)$  such that for all $i$, $j \in [N,N+g(N)]$, $|a_i-a_j|\leq\eps$.
\end{proposition}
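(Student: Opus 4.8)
The plan is to run a telescoping-plus-pigeonhole argument on the ``grid points'' $N_k := \wt{g}^{(k)}(0)$ for $k = 0, 1, \ldots, m+1$, where $m := \lceil 1/\eps \rceil$. By the remark immediately preceding the statement (valid because $\wt{g}(x) = x + g(x) \geq x$), the finite list $0 = N_0 \leq N_1 \leq \cdots \leq N_{m+1}$ is nondecreasing, each successive point satisfies $N_{k+1} = \wt{g}(N_k) = N_k + g(N_k)$, and all of $N_0, \ldots, N_{m+1}$ lie in $\{0, \ldots, \wt{g}^{(m+1)}(0)\}$, the range on which the finite sequence $(a_i)$ is given. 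First I would reduce to the nondecreasing case: if $(a_i)$ is nonincreasing, I replace it by $(1 - a_i)$, which is nondecreasing, still takes values in $[0,1]$, and satisfies $|(1-a_i)-(1-a_j)| = |a_i - a_j|$, so the conclusion for the new sequence is verbatim the conclusion for the old one. Assume then that $(a_i)$ is nondecreasing.

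Next I would look at the $m+1$ consecutive increments $d_k := a_{N_{k+1}} - a_{N_k} \geq 0$ for $k = 0, \ldots, m$. These telescope: $\sum_{k=0}^{m} d_k = a_{N_{m+1}} - a_{N_0} \leq 1 - 0 = 1$. Since $m = \lceil 1/\eps \rceil \geq 1/\eps$, one has $(m+1)\eps = m\eps + \eps \geq 1 + \eps > 1$, so the $m+1$ nonnegative numbers $d_0, \ldots, d_m$ cannot all exceed $\eps$, for otherwise their sum would be strictly larger than $(m+1)\eps > 1$. Hence there is an index $k^\ast \in \{0, \ldots, m\}$ with $d_{k^\ast} = a_{N_{k^\ast + 1}} - a_{N_{k^\ast}} \leq \eps$.

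Finally, I would set $N := N_{k^\ast} = \wt{g}^{(k^\ast)}(0)$. Then $N \leq N_m = \wt{g}^{(m)}(0)$ since $k^\ast \leq m$ and $k \mapsto N_k$ is nondecreasing, and $N + g(N) = \wt{g}(N) = N_{k^\ast + 1} \leq N_{m+1} = \wt{g}^{(m+1)}(0)$ since $k^\ast + 1 \leq m+1$; in particular the window $[N, N+g(N)] = [N_{k^\ast}, N_{k^\ast+1}]$ stays inside the domain of $(a_i)$. For any $i, j$ in this window, monotonicity gives $a_{N_{k^\ast}} \leq a_i, a_j \leq a_{N_{k^\ast+1}}$, whence $|a_i - a_j| \leq a_{N_{k^\ast+1}} - a_{N_{k^\ast}} = d_{k^\ast} \leq \eps$, which is exactly the required conclusion.

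I do not expect a genuine obstacle here: the only points needing care are the index bookkeeping — making sure the chosen window $[N, N+g(N)]$ never pokes outside $[0, \wt{g}^{(m+1)}(0)]$, which is precisely why the hypothesis supplies $(a_i)$ up to $\wt{g}^{(m+1)}(0)$ rather than merely up to $\wt{g}^{(m)}(0)$ — and the pigeonhole count $(m+1)\eps > 1$, where taking $m = \lceil 1/\eps \rceil$ rather than a smaller value is what makes the strict inequality available.
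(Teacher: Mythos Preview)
Your proof is correct and follows essentially the same telescoping-plus-pigeonhole argument as the paper: both consider the grid points $\wt{g}^{(k)}(0)$, bound the total variation by $1$, and conclude that some consecutive increment is at most $\eps$. The only cosmetic differences are that the paper reduces to the nonincreasing case and phrases the pigeonhole step as a proof by contradiction, whereas you reduce to the nondecreasing case and argue directly.
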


\begin{proof}
Assume w.l.o.g. that $(a_i)$ is nonincreasing. Assume that the conclusion is false, hence in particular for all $i \leq \left\lceil\frac1\eps\right\rceil$, $a_{\wt{g}^{(i)}(0)} - a_{\wt{g}^{(i+1)}(0)} > \eps$. Then
$$a_0 \geq a_0 - a_{\wt{g}^{\left(\left\lceil\frac1\eps\right\rceil + 1\right)}(0)} = \sum_{i=0}^{\left\lceil\frac1\eps\right\rceil} \left(a_{\wt{g}^{(i)}(0)} - a_{\wt{g}^{(i+1)}(0)} \right)> \left\lceil\frac1\eps\right\rceil\cdot\eps \geq 1,$$
a contradiction.
\end{proof}

This immediately gives us a uniform and computable rate of metastability for monotone sequences in the unit interval.

\begin{corollary}\label{mmcp}
Let $(a_n)$ be a monotone sequence in $[0,1]$. Then for all $\eps>0$ and $g:\N\to\N$ there is an $N \leq \wt{g}^{\left(\left\lceil\frac1\eps\right\rceil \right)}(0)$ such that for all $i$, $j \in [N,N+g(N)]$, $|a_i-a_j|\leq\eps$.
\end{corollary}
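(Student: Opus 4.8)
The plan is to derive Corollary~\ref{mmcp} directly from Proposition~\ref{fmcp} by a simple padding argument. Given a monotone sequence $(a_n)$ in $[0,1]$ together with $\eps > 0$ and $g : \N \to \N$, I would first observe that the finite initial segment $(a_i)_{i=0}^{\wt{g}^{(\lceil 1/\eps\rceil + 1)}(0)}$ is itself a finite monotone sequence in $[0,1]$, so Proposition~\ref{fmcp} applies to it verbatim.

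Applying that proposition yields an $N \leq \wt{g}^{(\lceil 1/\eps\rceil)}(0)$, with the additional guarantee that $N + g(N) \leq \wt{g}^{(\lceil 1/\eps\rceil + 1)}(0)$, such that $|a_i - a_j| \leq \eps$ for all $i,j \in [N, N+g(N)]$. The only point that needs a word of care is that in the corollary we are working with the full infinite sequence $(a_n)$ rather than a finite segment: we must know that the indices $i, j$ appearing in the conclusion actually lie within the range for which Proposition~\ref{fmcp} controls the values. This is exactly what the side condition $N + g(N) \leq \wt{g}^{(\lceil 1/\eps\rceil + 1)}(0)$ provides — it ensures $[N, N+g(N)]$ is contained in $\{0, 1, \dots, \wt{g}^{(\lceil 1/\eps\rceil+1)}(0)\}$, so the inequality $|a_i - a_j| \leq \eps$ transfers from the finite segment to the infinite sequence without change. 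The bound $N \leq \wt{g}^{(\lceil 1/\eps\rceil)}(0)$ is then precisely the claimed rate of metastability.

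There is no real obstacle here; the entire content was already packaged into Proposition~\ref{fmcp}, and the corollary is essentially just the statement that a bound valid on a sufficiently long finite segment is a bound on the sequence. If anything, the one thing worth making explicit is why a long enough finite segment suffices — namely that the rate $\Theta(\eps, g) = \wt{g}^{(\lceil 1/\eps\rceil)}(0)$ depends only on $\eps$ and $g$ and not on the sequence, so one can decide in advance how much of $(a_n)$ to look at. I would therefore keep the proof to one or two lines, simply invoking Proposition~\ref{fmcp} on the truncation and noting that the side condition on $N + g(N)$ is what makes the conclusion meaningful for the infinite sequence.
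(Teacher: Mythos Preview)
Your proposal is correct and matches the paper's approach: the paper states the corollary with no written proof, treating it as an immediate consequence of Proposition~\ref{fmcp}, which is exactly the truncation-and-transfer argument you spell out. Your explicit remark about the side condition $N+g(N)\leq \wt{g}^{(\lceil 1/\eps\rceil+1)}(0)$ is the only thing worth saying, and the paper implicitly relies on it as well.
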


Even though, as we said in the Introduction, we cannot hope to obtain rates of convergence for the iterations which we shall discuss, we shall use such rates (as they apply to sequences of parameters used in the construction of the iterations) as part of the data in terms of which our rates of metastability will be defined. Another notion that we shall need in this vein will be that of a {\it modulus of uniform continuity} for a function $f: [0,1] \to [0,1]$, which is a function $\omega : (0, \infty) \to (0,\infty)$ such that for any $\delta >0$ and any $x$, $y \in [0,1]$ with $|x-y| < \omega(\delta)$, we have that $|f(x)-f(y)| < \delta$. Clearly, a function $f: [0,1] \to [0,1]$ has a modulus of uniform continuity if and only if it is uniformly continuous.

We shall also need the following elementary inequality concerning real numbers.

\begin{lemma}\label{l2}
For all $w$, $x$, $y$, $z \in \R$, $|x-y| \geq |w-z| - |w-x| - |y-z|$.
\end{lemma}

\begin{proof}
By the triangle inequality, one has $|w-z| \leq |w-x|+|x-y|+|y-z|$.
\end{proof}

\section{Continuous functions}\label{sec:cont}

The following theorem provides rates of metastability corresponding to convergence theorems that concern iterations of general continuous functions (on the unit interval) that fall into the Krasnoselski-Mann scheme. We generally follow the ideas of Gaspar \cite{Gas11}; the main differences are that the sequence needs now to pass three times (labelled below $j_0$, $j_1$ and $j_2$) through the two intervals considered in the proof (labelled by $i_0$ and $i_1$) and that one introduces an extra use of $g$ between these times (seen below in the formula for $u$).

\begin{theorem}\label{mann}
Let $f: [0,1] \to [0,1]$. Let $(x_n)$ be a sequence in $[0,1]$ such that for each $n$, $x_{n+1}$ is between $x_n$ and $f(x_n)$.

Define, for any suitable $\eps$, $g$, $\omega$, $\beta$, $p$, $n$:
\begin{align*}
m_\eps&:=\left\lceil\frac6\eps\right\rceil\\
c_\eps&:=\frac1{4m_\eps}\\
A_{\eps,g}(p)&:=\frac1{\max\left(1,12m_\eps g(p)\right)}\\
C_{\eps,g,\omega}(p)&:=\min\left(A_{\eps,g}(p),\omega\left(A_{\eps,g}(p)\right)\right)\\
u^{\eps,g,\omega,\beta}_0&:=\beta\left(c_\eps\right)\\
u^{\eps,g,\omega,\beta}_{n+1}&:=\max\left(u^{\eps,g,\omega,\beta}_n+g\left(u^{\eps,g,\omega,\beta}_n\right)+1,\beta\left(C_{\eps,g,\omega}\left(u^{\eps,g,\omega,\beta}_n\right)\right)\right)\\
\Phi^{\rm KM}_{\omega,\beta}(\eps,g)&:=u^{\eps,g,\omega,\beta}_{2m_\eps^2}.
\end{align*}
Let $\omega: (0, \infty) \to (0, \infty)$ be such that $\omega$ is a modulus of uniform continuity for $f$ and $\beta : (0, \infty) \to \N$ be such that $(x_n - x_{n+1})$ tends towards $0$ with rate of convergence $\beta$.

Let $\eps > 0$ and $g: \N \to \N$. Then there is an $N \leq \Phi^{\rm KM}_{\omega,\beta}(\eps,g)$ such that for all $i$, $j \in [N, N+ g(N)]$, $|x_i - x_j| \leq \eps$.
\end{theorem}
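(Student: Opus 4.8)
The overall strategy is a finitary, quantitative implementation of the classical argument (in Park's formulation): one shows that if the iteration fails to be "$\eps$-stable" over a large window, then the sequence $(x_n)$ must wiggle back and forth between two fixed sub-intervals of $[0,1]$ infinitely (i.e.\ too many) times, and each such crossing, combined with the smallness of $x_n - x_{n+1}$ guaranteed by $\beta$ and the continuity of $f$ measured by $\omega$, forces $f$ to have behaviour incompatible with continuity — ultimately producing more crossings than the pigeonhole budget $2m_\eps^2$ allows. Concretely, I would partition $[0,1]$ into $m_\eps = \lceil 6/\eps\rceil$ subintervals of length $\le \eps/6$; assuming for contradiction that for every candidate $N$ in the relevant range there exist $i,j \in [N, N+g(N)]$ with $|x_i - x_j| > \eps$, one extracts for each of the $2m_\eps^2$ stages $u_n$ a pair of indices witnessing a large jump, hence (since a single step $x_k \to x_{k+1}$ changes the value by at most $|x_k - x_{k+1}| \le c_\eps$ once $k \ge \beta(c_\eps)$, which is tiny compared to $\eps$) a whole run of consecutive indices passing from one fixed subinterval $I_{i_0}$ across to another fixed subinterval $I_{i_1}$ with $|I_{i_0} - I_{i_1}|$ of order $\eps$.

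The key combinatorial step is then to track, over the $2m_\eps^2$ stages, which ordered pair $(i_0, i_1)$ of subinterval-indices is being crossed: there are at most $m_\eps^2$ such ordered pairs, so by pigeonhole some pair $(i_0,i_1)$ recurs at least $2$ times with, in between, a full excursion back — more carefully, one gets at least three times $j_0 < j_1 < j_2$ at which the sequence sits in $I_{i_0}$ (and in between has visited $I_{i_1}$), with these times separated enough (this is where the definition $u_{n+1} \ge u_n + g(u_n) + 1$ and the extra insertion of $g$ in the formula for $u$ come in) that $g$ can be applied between them and the metastability window can be opened up. Here is where the constants $A_{\eps,g}(p)$, $C_{\eps,g,\omega}(p)$ and the use of $\beta(C_{\eps,g,\omega}(u_n))$ are calibrated: past index $\beta(C_{\eps,g,\omega}(p))$, consecutive steps differ by at most $C_{\eps,g,\omega}(p) \le \omega(A_{\eps,g}(p))$, so that over a block of length $g(p)$ the value moves by at most $g(p) \cdot A_{\eps,g}(p) \le 1/(12 m_\eps)$, far less than the width between $I_{i_0}$ and $I_{i_1}$; this is what lets one locate, near each of $j_0, j_1, j_2$, indices $k$ with $x_k$ close to a common target $t$ while $f(x_k)$ (equivalently, the direction of the next step $x_{k+1} - x_k$) points consistently in one direction, then the opposite direction, then back — contradicting the fact that a continuous $f$, near a point $x$ with $f(x) \ne x$, pushes all nearby points the same way.

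To assemble this formally I would proceed as follows. First, fix the partition and record the elementary facts: (i) for $k \ge \beta(\delta)$, $|x_{k+1} - x_k| \le \delta$, and (ii) Lemma~\ref{l2} to pass between "$x_i$ and $x_j$ are far apart" and "$x_i$ is near one subinterval-corner and $x_j$ near another." Second, set up the iteration $(u_n)_{n \le 2m_\eps^2}$ and observe $u_n$ is increasing with $u_{2m_\eps^2} = \Phi^{\rm KM}_{\omega,\beta}(\eps,g)$, so all indices invoked stay within the stated bound $N \le \Phi^{\rm KM}_{\omega,\beta}(\eps,g)$. Third, run the contradiction hypothesis to attach to each stage an ordered pair of subintervals and a "crossing direction"; pigeonhole on pairs (bound $m_\eps^2$) and then on direction/multiplicity to extract the triple $j_0, j_1, j_2$ with the separation properties. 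Fourth, at each $j_r$ use (i)–(ii) and the modulus $\omega$ to produce points $y_0, y_1, y_2$ all within $A_{\eps,g}$ of a common point $x^\ast$, with $f(y_0), f(y_1), f(y_2)$ (hence the betweenness of $x_{\cdot+1}$) forcing $\sgn(f(x^\ast) - x^\ast)$ to be both positive and negative, or forcing $|f(x^\ast)-x^\ast|$ to be simultaneously $\ge$ something of order $\eps$ and $< \omega(\cdot)$-small — the contradiction. The main obstacle I anticipate is purely bookkeeping: carefully choosing, at each of the three crossing times, *which* index in the window $[u_n, u_n + g(u_n)]$ to name so that (a) it lies in the correct subinterval, (b) the next step is large enough to certify the crossing direction, and (c) all three chosen points are genuinely within $A_{\eps,g}$ of one fixed $x^\ast$ — getting the quantifier order right between "choose $x^\ast$" and "choose the three indices," and verifying that the stage-separation built into $(u_n)$ really does give enough room for the $g$-window at the *middle* stage $j_1$, is the delicate part. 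Everything else is triangle-inequality arithmetic with the explicitly chosen constants $m_\eps, c_\eps, A_{\eps,g}, C_{\eps,g,\omega}$.
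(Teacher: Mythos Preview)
Your framework matches the paper: partition into $m_\eps$ intervals, assume metastability fails at every $u_n$, pigeonhole on the pair of intervals containing $(x_{u_n}, x_{H(u_n)})$ to extract three stages $j_0 < j_1 < j_2 \le 2m_\eps^2$ with a common pair $(I_{i_0}, I_{i_1})$, and observe $i_1 - i_0 \ge 2$ so that there is an intermediate interval $I_{i_0+1}$.

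The gap is in your contradiction mechanism. You propose to locate iterates $y_0, y_1, y_2$ all within $A_{\eps,g}$ of a single point $x^\ast$ and force either a sign contradiction or a size contradiction for $f(x^\ast) - x^\ast$. Two concrete problems. First, the three crossings pigeonhole hands you are all in the \emph{same} direction (each block goes from $I_{i_0}$ to $I_{i_1}$), so no sign alternation arises among $y_0,y_1,y_2$ directly; you would have to use a return crossing between blocks, but then you have not explained how to select $x^\ast$ beforehand---the very obstacle you flag. Second, your displacement estimate ``over a block of length $g(p)$ the value moves by at most $g(p)\cdot A_{\eps,g}(p) \le 1/(12m_\eps)$'' is wrong as used: the step-size bound $C(u_{j_0})$ available past $\beta(C(u_{j_0}))$ involves $g(u_{j_0})$, whereas the later blocks have lengths $g(u_{j_1}),g(u_{j_2})$, and for arbitrary $g$ these are unrelated; moreover within each block the sequence visibly moves by more than $\eps/2$, so no such uniform bound can hold.

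What the paper actually does is a dichotomy on $f - \id$ over the \emph{middle half} of $I_{i_0+1}$. In Case~I, $|f(x) - x| < 1/(4m\, g(u_{j_0}))$ there; then the \emph{velocity} bound $|x_{l+1} - x_l| \le |f(x_l) - x_l|$ (from the betweenness hypothesis, not from $\beta$) shows the sequence cannot traverse the middle half in the $\le g(u_{j_0})$ steps of the first block---the $g$'s match because both come from stage $j_0$, and only $j_0$ is used. In Case~II, some $x$ in the middle half has $|f(x) - x| \ge 1/(4m\, g(u_{j_0}))$ (note: of order $1/(m\, g(u_{j_0}))$, not of order $\eps$); the modulus $\omega$ propagates the same sign of $f - \id$ to $J := [x - C(u_{j_0}),\, x + C(u_{j_0})]$, turning $J$ into a one-way \emph{barrier}. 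The contradiction is then an induction, one step at a time: once the sequence is on the far side of $x$ (at time $H(u_{j_1})$ or $u_{j_1}$, depending on the sign), the step size $\le C(u_{j_0})$ exactly matches the half-width of $J$, so either the next iterate is still beyond $J$ by the triangle inequality, or it lies in $J$ and is pushed outward by $f$. Hence the sequence never re-crosses $x$, contradicting its arrival in $I_{i_0}$ (resp.\ $I_{i_1}$) at the next stage. No product of block length times step size ever appears in Case~II, which is precisely why the incomparability of $g(u_{j_0})$ and $g(u_{j_1})$ causes no trouble. This dichotomy-plus-barrier structure is the missing idea in your plan.
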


\begin{proof}
We may now drop $\eps$, $g$, $\omega$, $\beta$ where they show up as indices or arguments. It is immediate that:
\begin{itemize}
\item for all $n$, $u_n \leq u_n + g(u_n) < u_{n+1}$;
\item for all $n$, $\beta(c) \leq u_n$;
\item for all $n$, $\beta(C(u_n))\leq u_{n+1}$.
\end{itemize}
Assume by way of contradiction that for all $N \leq \Phi^{\rm KM}$  there are $i$, $j \in [N,N +g(N)]$ with $|x_i-x_j| > \eps$, so for all $N \leq \Phi^{\rm KM}$ there is an $i \in (N,N+g(N)]$ with $|x_N-x_i|>\eps/2$ -- in particular $g(N) > 0$. Put, for each such $N$, $H(N)$ to be the least $i$ with this property.

Denote, for each $n \in \{0,\ldots,m-1\}$, $I_n:=[\frac n m,\frac{n+1}m]$. Then, by the pigeonhole principle, there are $j_0 < j_1 < j_2 \leq 2m^2$ and $i_0$, $i_1 \leq m$ -- assume w.l.o.g. that $i_0 \leq i_1$ -- such that $x_{u_{j_0}}$, $x_{u_{j_1}}$, $x_{u_{j_2}} \in I_{i_0}$ and $x_{H\left(u_{j_0}\right)}$, $x_{H\left(u_{j_1}\right)}$, $x_{H\left(u_{j_2}\right)} \in I_{i_1}$. Since, by the definition of $H$, $|x_{u_{j_0}}-x_{H\left(u_{j_0}\right)}| >\eps/2$, we have that $i_1 - i_0 \geq 2$. Thus, $I_{i_0+1}$ is an interval between and distinct from $I_{i_0}$ and $I_{i_1}$.

We now distinguish two cases.\\[2mm]

{\bf Case I.} For all $x$ in the middle half of $I_{i_0+1}$, $|fx-x| < \frac1{4mg\left(u_{j_0}\right)}$.\\[1mm]

Since $\beta(c) \leq u_{j_0}$, we have that for all $n \in [u_{j_0}, H(u_{j_0}))$, $|x_n - x_{n+1}| \leq c = 1/(4m)$. Thus, considering that $x_{u_{j_0}} \in I_{i_0}$ and $x_{H(u_{j_0})} \in I_{i_1}$, there is a $k\in [u_{j_0}, H(u_{j_0}))$ such that $x_k$ is in the second quarter of $I_{i_0+1}$, and take $k$ to be biggest with this property, so that there is no $l \in [k,H(u_{j_0})]$ with $x_l$ to the left of the middle half of $I_{i_0+1}$. Then, for all $l \in [k,H(u_{j_0}))$ such that $x_l$ is in the middle half of $I_{i_0+1}$, $|x_{l+1} - x_l| \leq |f(x_l) - x_l| \leq \frac1{4mg\left(u_{j_0}\right)}$. Since $H(u_{j_0}) - k \leq (u_{j_0} + g(u_{j_0})) - u_{j_0} = g(u_{j_0})$, we have by induction that for all $l \in (k,H(u_{j_0})]$, $x_l$ is in the third quarter of  $I_{i_0+1}$, contradicting the fact that $x_{H(u_{j_0})} \in I_{i_1}$.\\[2mm]

{\bf Case II.} There is an $x$ in the middle half of $I_{i_0+1}$ with $|fx-x| \geq \frac1{4mg\left(u_{j_0}\right)}$.\\[1mm]

Put $J:=[x-C(u_{j_0}),x+C(u_{j_0})]$. Then, by the definitions of $C$ and $A$, for all $y$ in the interior of $J$, $|y-x| < \frac1{12mg\left(u_{j_0}\right)}$ and $|fx-fy| < \frac1{12mg\left(u_{j_0}\right)}$, so, by using Lemma~\ref{l2},
$$|fy-y| \geq |fx-x| - |fx-fy| - |y-x| \geq \frac1{12mg\left(u_{j_0}\right)},$$
so for all $y \in J$, by the continuity of $f$,
$$|fy-y| \geq  \frac1{12mg\left(u_{j_0}\right)}.$$

Clearly, $J$ is entirely contained within the interior of $I_{i_0+1}$. We now distinguish two sub-cases.

{\bf Sub-case 1.} We have $fx > x$. (Note that we cannot assume w.l.o.g. that $fx>x$, since we shall use the fact that $fx-x$ has the same sign as $i_1-i_0$.)

Then for all $y \in J$, $fy>y$. We show that for each $n \geq H(u_{j_1})$, $x_n \geq x$. If $n=H(u_{j_1})$, this is immediate since then $x_n \in I_{i_1}$. Assume now $x_n \geq x$ for an $n \geq H(u_{j_1})$. Since $n \geq H(u_{j_1}) \geq u_{j_1} \geq \beta(C(u_{j_0}))$, $|x_n-x_{n+1}| \leq C(u_{j_0})$. If $x_n \geq x + C(u_{j_0})$, then, from $x_n \leq |x_n - x_{n+1}| + x_{n+1}$, we get that
$$x_{n+1} \geq x_n - |x_n - x_{n+1}| \geq x + C(u_{j_0}) - C(u_{j_0}) = x.$$
If $x_n<x + C(u_{j_0})$, then $x_n \in J$, so in that case, since $f(x_n) > x_n$, $x_{n+1} \geq x_n$, so $x_{n+1} \geq x$.

Since $H(u_{j_1}) \leq u_{j_1} + g(u_{j_1}) < u_{j_2} $, we have that $x_{u_{j_2}} \geq x$, contradicting the fact that $ x_{u_{j_2}} \in I_{i_0}$.

{\bf Sub-case 2.} We have $fx < x$. This sub-case follows roughly in the same way as sub-case 1, with $u_{j_1}$ replacing $H(u_{j_1})$ and $H(u_{j_1})$ replacing $u_{j_2}$.
\end{proof}

In the case where for all $n$, $x_{n+1}=f(x_n)$, the above gives a rate of metastability for the theorem of Hillam in \cite{Hil76}, a rate which is slightly more complicated than the one previously extracted by Gaspar \cite{Gas11}. In the case of the Krasnoselski-Mann iteration, i.e. where, given a parameter sequence $(t_n) \subseteq [0,1]$, for all $n$, $x_{n+1}=(1-t_n)x_n + t_nf(x_n)$, Rhoades \cite{Rho74a} showed convergence under the assumption $t_n \to 0$. Since here $x_n - x_{n+1} = t_n(x_n - f(x_n))$, a rate of convergence for $(t_n)$ towards $0$ is also a rate of convergence for $(x_n - x_{n+1})$ towards $0$, and thus our result covers this case. A particular case of that was first treated by Franks and Marzec \cite{FraMar71}, namely the case when for all $n$, $t_n = 1/(n+1)$. There, a rate of convergence for $(t_n)$ may be taken to be $\delta \mapsto \left\lceil\frac1\delta\right\rceil$, since for all $\delta>0$ and all $n \geq \left\lceil\frac1\delta\right\rceil$,
$$\frac1{n+1} \leq \frac1{\left\lceil\frac1\delta\right\rceil +1} \leq\delta.$$
Thus, in this case one obtains an {\bf unconditional} rate of metastability for the iterative sequence.

Rhoades has also considered \cite{Rho76} the case of the Ishikawa iteration -- i.e. where, given two parameter sequences $(t_n)$, $(s_n) \subseteq [0,1]$, for all $n$, $x_{n+1}=(1-t_n)x_n + t_nf(s_nf(x_n) + (1-s_n)x_n)$ -- for which we shall now extract a rate of metastability. (Since this configuration generalizes the one in Theorem~\ref{mann}, we could have given Theorem~\ref{mann} as a corollary of Theorem~\ref{ishikawa} below, but the particularization of the rate of metastability would not have removed the extraneous complications introduced by this more general case.)

\begin{theorem}\label{ishikawa}
Let $f: [0,1] \to [0,1]$. Let $(x_n)$ and $(y_n)$ be sequences in $[0,1]$ such that for each $n$, $y_n$ is between $x_n$ and $f(x_n)$ and $x_{n+1}$ is between $x_n$ and $f(y_n)$.

Define, for any suitable $\eps$, $g$, $\omega$, $\beta$, $\gamma$, $p$, $n$:
\begin{align*}
m_\eps&:=\left\lceil\frac6\eps\right\rceil\\
c_\eps&:=\frac1{4m_\eps}\\
B_{\eps,g}(p)&:=\frac1{\max\left(1,8m_\eps g(p)\right)}\\
Z_{\eps,g,\omega}(p)&:=\min\left(B_{\eps,g}(p),\omega\left(B_{\eps,g}(p)\right)\right)\\
C_{\eps,g,\omega}(p)&:=\min\left(\frac{Z_{\eps,g}(p)}3,\omega\left(\frac{Z_{\eps,g}(p)}3\right)\right)\\
u^{\eps,g,\omega,\beta,\gamma}_0&:=\beta\left(c_\eps\right)\\
u^{\eps,g,\omega,\beta,\gamma}_{n+1}&:=\max\left(u^{\eps,g,\omega,\beta,\gamma}_n+g\left(u^{\eps,g,\omega,\beta,\gamma}_n\right)+1,\beta\left(\frac{C_{\eps,g,\omega}\left(u^{\eps,g,\omega,\beta,\gamma}_n\right)}2\right),\gamma\left(\frac{C_{\eps,g,\omega}\left(u^{\eps,g,\omega,\beta,\gamma}_n\right)}2\right)\right)\\
\Phi^{\rm I}_{\omega,\beta,\gamma}(\eps,g)&:=u^{\eps,g,\omega,\beta,\gamma}_{2m_\eps^2}.
\end{align*}
Let $\omega: (0, \infty) \to (0, \infty)$ be such that $\omega$ is a modulus of uniform continuity for $f$, $\beta : (0, \infty) \to \N$ be such that $(x_n - x_{n+1})$ tends towards $0$ with rate of convergence $\beta$ and $\gamma : (0, \infty) \to \N$ be such that $(x_n - y_n)$ tends towards $0$ with rate of convergence $\gamma$.

Let $\eps > 0$ and $g: \N \to \N$. Then there is an $N \leq \Phi^{\rm I}_{\omega,\beta,\gamma}(\eps,g)$ such that for all $i$, $j \in [N, N+ g(N)]$, $|x_i - x_j| \leq \eps$.
\end{theorem}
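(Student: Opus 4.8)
The plan is to mirror closely the structure of the proof of Theorem~\ref{mann}, making the adjustments dictated by the more complicated update rule $x_{n+1}=(1-t_n)x_n + t_nf(s_nf(x_n)+(1-s_n)x_n)$, which is here presented abstractly as ``$y_n$ between $x_n$ and $f(x_n)$, and $x_{n+1}$ between $x_n$ and $f(y_n)$''. First I would drop the indices $\eps$, $g$, $\omega$, $\beta$, $\gamma$ and record the three monotonicity/bound facts about $(u_n)$: for all $n$, $u_n \le u_n + g(u_n) < u_{n+1}$; $\beta(c) \le u_n$; and, crucially now, $\beta(C(u_n)/2) \le u_{n+1}$ and $\gamma(C(u_n)/2) \le u_{n+1}$. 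Then, assuming for contradiction that no good $N \le \Phi^{\rm I}$ exists, I would as before extract $H(N)$ as the least $i \in (N, N+g(N)]$ with $|x_N - x_i| > \eps/2$ (so $g(N)>0$), apply the pigeonhole principle to the $2m^2$-many pairs $(x_{u_j}, x_{H(u_j)})$ to get $j_0<j_1<j_2 \le 2m^2$ and $i_0 \le i_1$ with $x_{u_{j_0}}, x_{u_{j_1}}, x_{u_{j_2}} \in I_{i_0}$ and the three $H$-values in $I_{i_1}$, and deduce $i_1 - i_0 \ge 2$, so $I_{i_0+1}$ is a genuine separating interval.

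The two cases are the same in spirit. In Case~I (all $x$ in the middle half of $I_{i_0+1}$ satisfy $|fx-x|$ small, now with threshold governed by $B_{\eps,g}$ rather than $A_{\eps,g}$), I would again use $\beta(c)\le u_{j_0}$ to get $|x_n - x_{n+1}| \le 1/(4m)$ on $[u_{j_0}, H(u_{j_0}))$, pick $k$ maximal with $x_k$ in the second quarter of $I_{i_0+1}$, and run the same inductive trapping argument. The new subtlety is that $|x_{n+1}-x_n| \le |f(y_n)-x_n|$, and $y_n$ lies between $x_n$ and $f(x_n)$; so when $x_l$ is in the middle half, I must bound $|f(y_l)-x_l|$, which requires $|f(x_l)-x_l|$ small enough that $y_l$ stays within a region where $f$ stays close to the identity. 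This is exactly why the threshold in $B$ carries the extra factor ($8m$ instead of $4m$) and why the modulus $\omega$ appears: from $|f(x_l)-x_l| < B$ and $|x_l - y_l| \le |f(x_l)-x_l| < B \le \omega(B)$ one gets $|f(y_l)-f(x_l)| < B$, hence $|f(y_l)-x_l| < 2B$, which I would arrange to be at most $1/(4mg(u_{j_0}))$ so that the induction keeping $x_l$ in the third quarter goes through, contradicting $x_{H(u_{j_0})} \in I_{i_1}$.

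In Case~II (some $x$ in the middle half of $I_{i_0+1}$ with $|fx-x| \ge$ threshold), I would first, using the definitions of $Z$ and then $C$, produce a closed interval $J = [x - C(u_{j_0}), x + C(u_{j_0})]$ contained in the interior of $I_{i_0+1}$ on which $|fy - y|$ is bounded below by a fixed positive quantity and has constant sign equal to that of $fx - x$ (equal to that of $i_1 - i_0$, i.e.\ positive). Here I need a \emph{second} widening step compared to Theorem~\ref{mann}: I must control not just $f$ near $x$ but the composite behavior, so that for $x_n$ near $x$, $y_n$ is still near $x$ and $f(y_n) > x_n$; this is why $C$ is built from $Z/3$ and why the argument of $\beta$ and $\gamma$ in $u_{n+1}$ is $C(u_{j_0})/2$ rather than $C(u_{j_0})$ — I want $|x_n - x_{n+1}| \le C(u_{j_0})/2$ and $|x_n - y_n| \le C(u_{j_0})/2$ simultaneously for $n \ge u_{j_1}$. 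Then, exactly as in sub-case~1 of Theorem~\ref{mann}, I would show by induction that $x_n \ge x$ for all $n \ge H(u_{j_1})$: if $x_n \ge x + C(u_{j_0})$ then $x_{n+1} \ge x_n - C(u_{j_0})/2 \ge x$; if $x \le x_n < x + C(u_{j_0})$ then $x_n \in J$ and, since $|x_n - y_n| \le C(u_{j_0})/2$ forces $y_n \in J$ as well (this is the point needing the $/3$ slack in $C$), $f(y_n) > x_n$, whence $x_{n+1} \ge x_n \ge x$. Since $H(u_{j_1}) \le u_{j_1}+g(u_{j_1}) < u_{j_2}$, we get $x_{u_{j_2}} \ge x$, contradicting $x_{u_{j_2}} \in I_{i_0}$; the opposite sign is sub-case~2, obtained by the same symmetry as before (shifting $u_{j_1}$ in for $H(u_{j_1})$ and $H(u_{j_1})$ in for $u_{j_2}$).

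I expect the main obstacle to be bookkeeping the nested error estimates: one must check that the chosen constants $B$, $Z$, $C$ and the halved arguments to $\beta$, $\gamma$ really do leave enough room for both the ``$y_n$ close to $x_n$'' perturbation and the ``$x_{n+1}$ close to $x_n$'' perturbation to be absorbed while keeping the relevant points inside $J$, inside the middle half, or inside a fixed quarter of $I_{i_0+1}$, as the case requires — and that the lower bound on $|fy-y|$ surviving on all of $J$ (via continuity, as in Theorem~\ref{mann}) is still comfortably larger than the step sizes involved. Once the constants are verified to fit, the combinatorial skeleton — pigeonhole on $(u_j)$, the $H$-function, the two cases and two sub-cases — transfers verbatim from the proof of Theorem~\ref{mann}.
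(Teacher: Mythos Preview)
Your plan matches the paper's proof in every structural respect, and the adaptations you describe for the Ishikawa setting are exactly those carried out there. Two bookkeeping points need correction. First, the Case~I/II threshold is $Z(u_{j_0})=\min\bigl(B(u_{j_0}),\omega(B(u_{j_0}))\bigr)$, not $B(u_{j_0})$; your inequality ``$B\le\omega(B)$'' is false in general, and $Z$ is introduced precisely so that from $|f(x_l)-x_l|<Z$ one obtains $|f(x_l)-x_l|<B$ and $|f(x_l)-x_l|<\omega(B)$ separately, after which your Case~I estimate $|f(y_l)-x_l|<2B=1/(4mg(u_{j_0}))$ goes through. Second, in Case~II, sub-case~1, the dichotomy must split at $x+C(u_{j_0})/2$, not at $x+C(u_{j_0})$: if $x\le x_n<x+C$ then $|x_n-y_n|\le C/2$ only yields $y_n<x+3C/2$, so $y_n\in J$ is not guaranteed; splitting instead at $x+C/2$ repairs this (the first branch becomes $x_{n+1}\ge x_n-C/2\ge x$, and in the second branch $y_n\le x_n+C/2<x+C$ together with $y_n\ge x_n\ge x$ gives $y_n\in J$). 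A minor aside: the ``$/3$'' in the definition of $C$ is there for the Lemma~\ref{l2} estimate yielding $|fy-y|\ge Z/3$ on $J$, whereas forcing $y_n\in J$ relies on the ``$/2$'' in the arguments to $\beta$ and $\gamma$.
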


\begin{proof}
We may now drop $\eps$, $g$, $\omega$, $\beta$, $\gamma$ where they show up as indices or arguments. It is immediate that:
\begin{itemize}
\item for all $n$, $u_n \leq u_n + g(u_n) < u_{n+1}$;
\item for all $n$, $\beta(c) \leq u_n$;
\item for all $n$, $\beta\left(\frac{C(u_n)}2\right)\leq u_{n+1}$ and $\gamma\left(\frac{C(u_n)}2\right)\leq u_{n+1}$.
\end{itemize}
Assume by way of contradiction that for all $N \leq \Phi^{\rm I}$  there are $i$, $j \in [N,N +g(N)]$ with $|x_i-x_j| > \eps$, so for all $N \leq \Phi^{\rm I}$ there is an $i \in (N,N+g(N)]$ with $|x_N-x_i|>\eps/2$ -- in particular $g(N) > 0$. Put, for each such $N$, $H(N)$ to be the least $i$ with this property.

Denote, for each $n \in \{0,\ldots,m-1\}$, $I_n:=[\frac n m,\frac{n+1}m]$. Then, by the pigeonhole principle, there are $j_0 < j_1 < j_2 \leq 2m^2$ and $i_0$, $i_1 \leq m$ -- assume w.l.o.g. that $i_0 \leq i_1$ -- such that $x_{u_{j_0}}$, $x_{u_{j_1}}$, $x_{u_{j_2}} \in I_{i_0}$ and $x_{H\left(u_{j_0}\right)}$, $x_{H\left(u_{j_1}\right)}$, $x_{H\left(u_{j_2}\right)} \in I_{i_1}$. Since, by the definition of $H$, $|x_{u_{j_0}}-x_{H\left(u_{j_0}\right)}| >\eps/2$, we have that $i_1 - i_0 \geq 2$. Thus, $I_{i_0+1}$ is an interval between and distinct from $I_{i_0}$ and $I_{i_1}$. Note also that $B\left(u_{j_0}\right) = \frac1{8mg\left(u_{j_0}\right)}$.

We now distinguish two cases.\\[2mm]

{\bf Case I.} For all $x$ in the middle half of $I_{i_0+1}$, $|fx-x| < Z\left(u_{j_0}\right)$.\\[1mm]

Since $\beta(c) \leq u_{j_0}$, we have that for all $n \in [u_{j_0}, H(u_{j_0}))$, $|x_n - x_{n+1}| \leq c = 1/(4m)$. Thus, considering that $x_{u_{j_0}} \in I_{i_0}$ and $x_{H(u_{j_0})} \in I_{i_1}$, there is a $k\in [u_{j_0}, H(u_{j_0}))$ such that $x_k$ is in the second quarter of $I_{i_0+1}$, and take $k$ to be biggest with this property, so that there is no $l \in [k,H(u_{j_0})]$ with $x_l$ to the left of the middle half of $I_{i_0+1}$. Let $l \in [k,H(u_{j_0}))$ be such that $x_l$ is in the middle half of $I_{i_0+1}$. Then $|f(x_l) - x_l| < B\left(u_{j_0}\right)$ and  $|f(x_l) - x_l| < \omega\left( B\left(u_{j_0}\right)\right)$, so $|y_l-x_l| <\omega\left( B\left(u_{j_0}\right)\right)$ and
$$|x_{l+1} - x_l| \leq |f(y_l) - x_l| \leq |f(y_l) -f(x_l)| + |f(x_l) - x_l| < B\left(u_{j_0}\right) + B\left(u_{j_0}\right) = \frac1{4mg\left(u_{j_0}\right)}.$$
Since $H(u_{j_0}) - k \leq (u_{j_0} + g(u_{j_0})) - u_{j_0} = g(u_{j_0})$, we have by induction that for all $l \in (k,H(u_{j_0})]$, $x_l$ is in the third quarter of  $I_{i_0+1}$, contradicting the fact that $x_{H(u_{j_0})} \in I_{i_1}$.\\[2mm]

{\bf Case II.} There is an $x$ in the middle half of $I_{i_0+1}$ with $|fx-x| \geq Z\left(u_{j_0}\right)$.\\[1mm]

Put $J:=[x-C(u_{j_0}),x+C(u_{j_0})]$. Then, by the definitions of $C$, $Z$ and $B$, for all $y$ in the interior of $J$, $|y-x| < \frac{Z\left(u_{j_0}\right)}3$ and $|fx-fy| < \frac{Z\left(u_{j_0}\right)}3$, so, by using Lemma~\ref{l2},
$$|fy-y| \geq |fx-x| - |fx-fy| - |y-x| \geq \frac{Z\left(u_{j_0}\right)}3,$$
so for all $y \in J$, by the continuity of $f$,
$$|fy-y| \geq  \frac{Z\left(u_{j_0}\right)}3.$$

Clearly, $J$ is entirely contained within the interior of $I_{i_0+1}$. We now distinguish two sub-cases.

{\bf Sub-case 1.} We have $fx > x$.

Then for all $y \in J$, $fy>y$. We show that for each $n \geq H(u_{j_1})$, $x_n \geq x$. If $n=H(u_{j_1})$, this is immediate since then $x_n \in I_{i_1}$. Assume now $x_n \geq x$ for an $n \geq H(u_{j_1})$. Since $n \geq H(u_{j_1}) \geq u_{j_1} \geq \beta\left(\frac{C(u_{j_0})}2\right)$, $|x_n-x_{n+1}| \leq \frac{C(u_{j_0})}2$ and similarly $n \geq \gamma\left(\frac{C(u_{j_0})}2\right)$, so $|x_n-y_n| \leq \frac{C(u_{j_0})}2$.

If $x_n \geq x + \frac{C(u_{j_0})}2$, then, from $x_n \leq |x_n - x_{n+1}| + x_{n+1}$, we get that
$$x_{n+1} \geq x_n - |x_n - x_{n+1}| \geq x + \frac{C(u_{j_0})}2 - \frac{C(u_{j_0})}2 = x.$$
If $x_n<x + \frac{C(u_{j_0})}2$, then $x_n \in J$, so in that case $fx_n > x_n$, so $fx_n \geq y_n \geq x_n$. Since $|x_n-y_n| \leq \frac{C(u_{j_0})}2$, $y_n \leq x_n + \frac{C(u_{j_0})}2 \leq x+C(u_{j_0})$, so $y_n \in J$. We get $fy_n \geq y_n$, so $fy_n \geq x_n$ and $x_{n+1} \geq x_n \geq x$.

Since $H(u_{j_1}) \leq u_{j_1} + g(u_{j_1}) < u_{j_2}$, we have that $x_{u_{j_2}} \geq x$, contradicting the fact that $ x_{u_{j_2}} \in I_{i_0}$.

{\bf Sub-case 2.} We have $fx < x$. This sub-case follows roughly in the same way as sub-case 1, with $u_{j_1}$ replacing $H(u_{j_1})$ and $H(u_{j_1})$ replacing $u_{j_2}$.
\end{proof}

If the self-mapping of the unit interval is monotone, Rhoades showed -- for the Mann iteration in \cite{Rho74b} and for the Ishikawa iteration in \cite{Rho76} -- that without imposing any additional condition on the sequences of parameters, the iterative sequence is also monotone. Thus, in these situations, Corollary~\ref{mmcp} provides a rate of metastability.

\section{Lipschitz functions}\label{sec:lip}

In \cite{Hil75}, Hillam showed that the Krasnoselski-Mann iteration for a Lipschitz function of Lipschitz constant $L>0$ with constant parameter $1/(L+1)$ is monotone, hence convergent. He also stated without proof that convergence also holds for a constant parameter strictly smaller than $2/(L+1)$. Later, Borwein and Borwein showed in \cite{BorBor91} that monotonicity holds if the parameters are all smaller than $1/(L+1)$ and convergence holds if the parameters are bounded away from $2/(L+1)$. In the sequel, we analyze this latter proof to obtain a rate of metastability. (The monotone cases are covered by Corollary~\ref{mmcp}.)

The following lemmas, together with their proofs, are adapted from \cite{BorBor91}.

\begin{lemma}\label{dl1}
Let $L >0$, $f: [0,1] \to [0,1]$ be $L$-Lipschitz, $x$, $x^* \in [0,1]$, $\delta \in (0,1)$ and $t \in [0,1]$ such that $t \leq \frac{2-\delta}{L+1}$ and $x^*=(1-t)x + tf(x)$. Let $p$ be a fixed point of $f$ which is located between $x$ and $x^*$. Then
$$|x^*-p| \leq (1-\delta)|x-p|.$$
\end{lemma}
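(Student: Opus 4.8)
The plan is a short sign-tracking argument. First dispose of the degenerate case $x=p$: then $|x-p|=0$ and, since $p$ lies between $x$ and $x^*$, also $x^*=p$, so both sides of the claimed inequality vanish. Assume henceforth $x\neq p$.

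Because $p$ lies between $x$ and $x^*$, it suffices to treat the case $x<p\leq x^*$; the case $x^*\leq p<x$ is symmetric (mirror all the data through $\tfrac12$, i.e.\ replace $x,x^*,p$ by $1-x,1-x^*,1-p$ and $f$ by $y\mapsto 1-f(1-y)$, which is again $L$-Lipschitz on $[0,1]$ with fixed point $1-p$). From $x^*=(1-t)x+tf(x)$ and $x\leq x^*$ one gets $f(x)\geq x$ — indeed if $t=0$ or $f(x)=x$ then $x^*=x=p$, contrary to $x\neq p$ — so $x^*$, being a convex combination of $x$ and $f(x)$, satisfies $x\leq x^*\leq f(x)$. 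Together with $x<p\leq x^*$ this yields the chain $x<p\leq x^*\leq f(x)$, hence $f(x)-p\geq 0$ and $p-x>0$, and by the Lipschitz condition $|f(x)-p|=|f(x)-f(p)|\leq L(p-x)$.

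Now, using $f(p)=p$,
$$0\;\leq\; x^*-p\;=\;(1-t)(x-p)+t\big(f(x)-f(p)\big)\;\leq\;-(1-t)(p-x)+tL(p-x)\;=\;\big(t(1+L)-1\big)(p-x),$$
where the leftmost inequality is $x^*\geq p$ and the last step uses $x-p=-(p-x)$ and $f(x)-p\leq L(p-x)$. This already forces $t(1+L)\geq 1$, and it gives $|x^*-p|\leq\big(t(1+L)-1\big)|x-p|$. The asserted bound $|x^*-p|\leq(1-\delta)|x-p|$ then follows from the hypothesis tying $t$ to $\delta$ and $L$ — namely $t(1+L)\leq 2-\delta$, equivalently $t\leq\frac{2-\delta}{1+L}$, which is exactly the quantitative reading of Borwein and Borwein's requirement that $t$ stay bounded away from $2/(1+L)$.

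There is no serious obstacle here; the only points needing care are (i) why $x-p$ and $f(x)-p$ have \emph{opposite} signs — this is precisely where the hypothesis that $p$ lies between $x$ and $x^*$ enters, and without it the naive estimate $|x^*-p|\leq(1-t+tL)|x-p|$ is worthless once $L>1$; (ii) extracting the sharp coefficient $t(1+L)-1$ rather than the cruder $\max(1-t,tL)$, which one does by also exploiting that $x^*-p$ has a definite sign; and (iii) checking that the reduction ``WLOG $x<p\leq x^*$'' genuinely covers both orderings together with the boundary cases.
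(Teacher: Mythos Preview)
Your proof is correct and follows essentially the same route as the paper's: reduce by symmetry to the case $x\leq p\leq x^*$, expand $x^*-p=(1-t)(x-p)+t(f(x)-f(p))$, bound the second term by the Lipschitz condition to obtain $(t(1+L)-1)(p-x)$, and then invoke the (tacit) hypothesis $t\leq(2-\delta)/(L+1)$, which you rightly identify as the missing link between $t$, $L$ and $\delta$. The paper's version is terser---it does not isolate the degenerate case $x=p$ or spell out the sign analysis---but the computation is line-for-line the same.
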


\begin{proof}
Assume w.l.o.g. $x \leq x^*$. Then
\begin{align*}
|x^*-p|&=x^*-p = (1-t)(x-p) + t(f(x)-f(p)) \leq (t-1)(p-x) + tL(p-x) \\
&= (t(1+L)-1)(p-x) \leq (1-\delta)|x-p|.
\end{align*}
\end{proof}

\begin{definition}
Let $(x_n) \subseteq [0,1]$ and $f: [0,1] \to [0,1]$.

We say that $(\sigma_n) \subseteq \{\pm1\}$ is the {\bf sign sequence} for $(x_n)$ relative to $f$ if $\sigma_0=1$ and for all $n$, if $f(x_n)-x_n\neq 0$, $\sigma_{n+1}=\sgn(f(x_n)-x_n)$ and otherwise $\sigma_{n+1}=\sigma_n$ -- note that for all $n$, if $\sigma_{n+1}=1$ (respectively $-1$), then $f(x_n)-x_n \geq 0$ (respectively $\leq 0$).

We say that $(q_n) \subseteq \N \cup \{\infty\}$ is the {\bf switching sequence} for $(x_n)$ relative to $f$ if, denoting by $(\sigma_n)$ the sign sequence for $(x_n)$ relative to $f$, $q_0=0$ and for all $n$, if $q_n=\infty$ then $q_{n+1}=\infty$ else if there is a $k> q_n$ with $\sigma_{k+1} = -\sigma_{q_n+1}$, $q_{n+1}$ is the least such $k$, else $q_{n+1}=\infty$ -- note that for all $n$ with $q_{n+1} < \infty$, we have that $\sigma_{q_{n+1}+1}=-\sigma_{q_n+1}$ and that for all $l \in [q_n+1,q_{n+1}]$, $\sigma_l=\sigma_{q_n+1}$.
\end{definition}

\begin{lemma}\label{dl3}
Let $L >0$, $f: [0,1] \to [0,1]$ be $L$-Lipschitz, $(t_n)$ and $(x_n)$ be sequences in $[0,1]$ such that for all $n$, $x_{n+1}=(1-t_n)x_n + t_nf(x_n)$. Let $(q_n)$ be the switching sequence for $(x_n)$ relative to $f$. Let $r \geq 1$ with $q_{r+1} < \infty$ and put $n_1:=q_r-1$ and $n_2:=q_{r+1}-1$. Let $\delta \in (0,1)$ be such that for all $n$, $t_n \leq \frac{2-\delta}{L+1}$. Then:
\begin{enumerate}[(i)]
\item for all $n \in [n_1+1,n_2+1]$, $x_n$ is located between $x_{n_1}$ and $x_{n_1+1}$;\label{i1}
\item $|x_{n_2}-x_{n_2+1}| \leq \left(1- \frac\delta2\right)|x_{n_1}-x_{n_1+1}|$.\label{i2}
\end{enumerate}
\end{lemma}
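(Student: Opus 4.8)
The plan is to unwind the definitions of the sign and switching sequences and then chase signs along the iteration between the two "switching times" $n_1+1 = q_r$ and $n_2+1 = q_{r+1}$. By the definition of $(q_n)$, on the index block $[q_r, q_{r+1}-1] = [n_1+1, n_2]$ the sign sequence $(\sigma_n)$ is constant, say equal to $s \in \{\pm 1\}$; this is exactly the block of steps where $f$ keeps pushing $x_n$ in one fixed direction. Concretely, for $n \in [n_1+1, n_2]$ one has $\sigma_{n+1} = s$, hence $s(f(x_n) - x_n) \geq 0$, so each step $x_{n+1} = (1-t_n)x_n + t_n f(x_n)$ moves $x_n$ weakly in the direction $s$ (or leaves it fixed). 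The first switch occurs at $n_1+1 = q_r$: $\sigma_{n_1+1} = -\sigma_{q_{r-1}+1}$, so the step from $x_{n_1}$ to $x_{n_1+1}$ goes in direction $s$, while the step from $x_{n_1-1}$ to $x_{n_1}$ went the other way — this is what pins down $x_{n_1+1}$ as the "new extreme" at the start of the monotone run. Likewise $n_2 = q_{r+1}-1$ is the last index in the run, and at $n_2+1 = q_{r+1}$ the sign flips again.

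For part \eqref{i1}, I would argue that the interval $[x_{n_1}, x_{n_1+1}]$ (oriented so that $x_{n_1+1} - x_{n_1}$ has sign $s$) is forward-invariant for the block. The key local fact, to be extracted from the Lipschitz/parameter hypothesis, is: if $x_n$ lies between $x_{n_1}$ and $x_{n_1+1}$ and $s(f(x_n)-x_n)\geq 0$, then $x_{n+1}$ still lies between $x_{n_1}$ and $x_{n_1+1}$. One direction is trivial since $x_{n+1}$ is a convex combination of $x_n$ and $f(x_n)$ and moves in direction $s$, so it cannot cross back past $x_{n_1}$; the nontrivial direction — that it does not overshoot $x_{n_1+1}$ — is where the bound $t_{n} \leq \frac{2-\delta}{L+1}$ enters, via an estimate of the type used in Lemma~\ref{dl1}, comparing $|f(x_n) - f(x_{n_1})|$ against $L|x_n - x_{n_1}|$ together with the convex-combination coefficients. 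Actually I expect that the clean way is to track the quantity $x_{n_1} + x_{n_1+1} - x_n$ (the reflection of $x_n$) and show the pair $\{x_n, \text{reflection}\}$ stays ordered; but a direct induction on $n \in [n_1+1, n_2+1]$ using the one-step estimate should suffice, being careful that at the final index $n_2+1$ the sign has flipped, so one only needs the invariance to have been maintained through step $n_2$.

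For part \eqref{i2}, the idea is that the run ends because the iteration has moved past a fixed point. Since $\sigma$ flips at $q_{r+1}$, the quantity $f(x_n) - x_n$ changes sign between $n = n_2 - 1$ (or wherever it was last nonzero with sign $s$) and $n = n_2$; by the intermediate value theorem applied to the continuous function $x \mapsto f(x) - x$ on the segment swept out, there is a fixed point $p$ of $f$ located between $x_{n_1}$ (equivalently, by part \eqref{i1}, anywhere in the relevant region) and $x_{n_2+1}$, and in fact between $x_{n_2}$ and $x_{n_2+1}$. Then Lemma~\ref{dl1}, applied with $x = x_{n_2}$, $x^* = x_{n_2+1}$, $t = t_{n_2}$ and $\delta$ replaced by $\delta/2$ — noting $t_{n_2} \leq \frac{2-\delta}{L+1}$ gives $t_{n_2}(1+L) - 1 \leq 1 - \delta \leq 1 - \delta/2$ and also $t_{n_2}(1+L)-1 \geq -1 \geq -(1-\delta/2)$, so $|t_{n_2}(1+L)-1| \leq 1-\delta/2$ — yields $|x_{n_2+1} - p| \leq (1-\tfrac\delta2)|x_{n_2} - p|$. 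Combining with $|x_{n_2} - x_{n_2+1}| \leq |x_{n_2} - p|$ (which holds because $p$ is between them, using part \eqref{i1} to locate $p$ on the correct side) and $|x_{n_2} - p| - |x_{n_2+1} - p| \geq \tfrac\delta2 |x_{n_2} - p| \geq \tfrac\delta2 |x_{n_2} - x_{n_2+1}|$... hmm, the cleanest route is: $|x_{n_2+1} - x_{n_2}| \le |x_{n_2} - p| + |x_{n_2+1} - p|$ is the wrong direction, so instead use that $p$ between $x_{n_2}$ and $x_{n_2+1}$ forces $|x_{n_2} - x_{n_2+1}| = |x_{n_2} - p| + |p - x_{n_2+1}|$ — no; rather one should get the bound directly as $|x_{n_1} - x_{n_1+1}| \geq |x_{n_1} - p| \geq \ldots$ telescoping through the whole run via Lemma~\ref{dl1} at each step where the sign is $s$.

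The main obstacle I anticipate is part \eqref{i1}: correctly handling the geometry of "between" with the right orientation, making the one-step forward-invariance estimate precise (this is the one place the Lipschitz bound and the $\frac{2-\delta}{L+1}$ threshold are genuinely needed, since for $t$ near $\frac{2}{L+1}$ the map can nearly double-reflect and the induction is tight), and making sure the existence and location of the fixed point $p$ for part \eqref{i2} is justified by the sign change of $f - \id$ together with continuity, placing $p$ between $x_{n_2}$ and $x_{n_2+1}$ so that Lemma~\ref{dl1} applies verbatim.
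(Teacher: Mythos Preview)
Your orientation is inverted: since $n_1+1=q_r$, one has $\sigma_{n_1+1}=\sigma_{q_{r-1}+1}=-\sigma_{q_r+1}=-s$, so the step $x_{n_1}\to x_{n_1+1}$ goes in direction $-s$, opposite to the monotone run. Thus (say $s=-1$) $x_{n_1}\leq x_{n_1+1}$, the run decreases from $x_{n_1+1}$, the trivial bound is $x_n\leq x_{n_1+1}$, and the hard bound is $x_n\geq x_{n_1}$.

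More seriously, your one-step invariance plan for (i) does not close. Using $f(x_{n_1})\geq x_{n_1}$ and the Lipschitz bound one gets only
\[
x_{n+1}-x_{n_1}\ \geq\ \bigl(1-t_n(L+1)\bigr)(x_n-x_{n_1}),
\]
and for $t_n$ near $\frac{2}{L+1}$ the factor is negative, so the induction fails; there is no Lemma~\ref{dl1}-type estimate here because no fixed point sits between $x_n$ and $x_{n+1}$ in general. The paper's argument is genuinely different: from the sign change at $n_1$ it locates a fixed point $p\in[x_{n_1},x_{n_1+1}]$, takes $p$ \emph{minimal}, and first applies Lemma~\ref{dl1} at $n_1$ to get $x_{n_1+1}-p\leq(1-\delta)(p-x_{n_1})$, so $p$ lies at or beyond the midpoint. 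It then shows $x_{n_2}\geq p$ by contradiction: if the run crossed below $p$ at some $n'<n_2$, Lemma~\ref{dl1} at $n'$ keeps $x_{n'+1}\geq x_{n_1}$, and the sign $f(x_{n'+1})\leq x_{n'+1}$ together with $f(x_{n_1})\geq x_{n_1}$ would yield a fixed point in $[x_{n_1},x_{n'+1}]\subsetneq[x_{n_1},p)$, contradicting minimality. A further application at $n_2$ gives the sharper bound $x_{n_2+1}\geq(1-\delta)x_{n_1}+\delta p$, which finishes (i). For (ii), placing a fixed point between $x_{n_2}$ and $x_{n_2+1}$ as you do gives at best $|x_{n_2}-x_{n_2+1}|\leq(2-\delta)|x_{n_2}-p'|$, with no control in terms of $|x_{n_1}-x_{n_1+1}|$; the paper instead combines the bound $x_{n_2+1}\geq(1-\delta)x_{n_1}+\delta p$ with $x_{n_1+1}-p\leq\frac12(x_{n_1+1}-x_{n_1})$ to read off the $(1-\frac\delta2)$ factor directly. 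The missing idea in your sketch is precisely this use of the \emph{minimal} fixed point in $[x_{n_1},x_{n_1+1}]$ and its midpoint location.
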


\begin{proof}
Let $(\sigma_n)$ be the sign sequence for $(x_n)$ relative to $f$. Assume w.l.o.g. that $\sigma_{n_1+2}=-1$, so $\sigma_{n_1+1}=\sigma_{n_2+2}=1$ and for all $n \in [n_1+2,n_2+1]$, $\sigma_n=-1$. Then $f(x_{n_1}) - x_{n_1} \geq 0 \geq f(x_{n_1+1}) - x_{n_1+1}$ and $x_{n_1} \leq x_{n_1+1}$, so there is a fixed point of $f$ in $[x_{n_1},x_{n_1+1}]$. Let $p$ be the least one (using here the continuity of $f$). By Lemma~\ref{dl1}, $x_{n_1+1}-p \leq (1-\delta)(p-x_{n_1}) \leq p-x_{n_1}$, which may also be written as $x_{n_1+1}-p \leq \frac12(x_{n_1+1}-x_{n_1})$. Note that $(x_n)$ is nonincreasing between $n_1+1$ and $n_2+1$.\\[1mm]

\noindent {\bf Claim 1.} We have that $x_{n_2} \geq p$.\\[1mm]
\noindent {\bf Proof of claim 1:} Assume that $p>x_{n_2}$. Then there is an $n' \in [n_1+1,n_2)$ with $x_{n'} \geq p > x_{n'+1}$. By Lemma~\ref{dl1}, we have that
$$p- x_{n'+1} \leq (1-\delta)(x_{n'}-p) \leq (1-\delta)(x_{n_1+1}-p) \leq (1-\delta)(p-x_{n_1}),$$
so
$$x_{n'+1} \geq (1-\delta)x_{n_1} + \delta p \geq x_{n_1}.$$
Since $n'+2 \leq n_2+1$, $\sigma_{n'+2}=-1$, so $f(x_{n'+1})-x_{n'+1} \leq 0$. Then, since $f(x_{n_1}) - x_{n_1} \geq 0$, there is a fixed point $q$ between $x_{n_1}$ and $x_{n'+1}$. But then $x_{n_1} \leq q \leq x_{n'+1} < p \leq x_{n_1+1}$, which contradicts the minimality of $p$.
\hfill $\blacksquare$\\[2mm]

Thus, using that either $x_{n_2+1} \geq p$ or $p \geq x_{n_2+1}$, we have that either $x_{n_2+1} \geq p \geq x_{n_1}$ or $x_{n_2} \geq p \geq x_{n_2+1}$.\\[1mm]

\noindent {\bf Claim 2.} We have that $x_{n_2 +1} \geq (1-\delta)x_{n_1} + \delta p$.\\[1mm]
\noindent {\bf Proof of claim 2:} In the first case above, the statement is obvious. Suppose now that $x_{n_2} \geq p \geq x_{n_2+1}$. Then, by Lemma~\ref{dl1},
$$p-x_{n_2+1} \leq (1-\delta)(x_{n_2}-p),$$
so
$$x_{n_2+1} \geq (2-\delta)p - (1-\delta)x_{n_2}.$$
It remains to be shown that $(2-\delta)p - (1-\delta)x_{n_2} \geq (1-\delta)x_{n_1} + \delta p$. Since $p-x_{n_1} \geq x_{n_1+1} - p \geq x_{n_2} - p$, $2p-x_{n_2} \geq x_{n_1}$, which we multiply by $(1-\delta)$ to obtain the desired inequality.
\hfill $\blacksquare$\\[2mm]

Let $n \in [n_1+1,n_2+1]$. Then $x_{n_1+1} \geq x_n \geq x_{n_2+1} \geq (1-\delta)x_{n_1} + \delta p \geq x_{n_1}$. Thus, we get (\ref{i1}).

We now prove (\ref{i2}). We have that
\begin{align*}
x_{n_2} - x_{n_2+1} &\leq x_{n_1+1} - x_{n_2+1} \leq (1-\delta)(x_{n_1+1}-x_{n_1}) + \delta(x_{n_1+1}-p) \\
&\leq  (1-\delta)(x_{n_1+1}-x_{n_1}) + \frac\delta2(x_{n_1+1}-x_{n_1}) = \left(1- \frac\delta2\right)(x_{n_1+1}-x_{n_1}).
\end{align*}
\end{proof}

We may now state and prove the corresponding metastability theorem.

\begin{theorem}
Let $L >0$, $f: [0,1] \to [0,1]$ be $L$-Lipschitz, $(t_n)$ and $(x_n)$ be sequences in $[0,1]$ such that for all $n$, $x_{n+1}=(1-t_n)x_n + t_nf(x_n)$.

Define, for any suitable $\eps$, $g$, $\delta$, $m$, $n$:
\begin{align*}
h^g_m(n)&:=g(m+n)\\
P^{\eps,g}_0&:=0\\
P^{\eps,g}_{n+1}&:=P^{\eps,g}_n + \wt{h^g_{P^{\eps,g}_n}}^{\left(\left\lceil\frac1\eps\right\rceil+1\right)}(0)\\
T_{\eps,\delta}&:= \left\lceil \log_{\left(1-\frac\delta2\right)}\eps \right\rceil +1\\
B_{\eps,g,\delta}&:= T_{\eps,\delta} + \wt{g}\left(P^{\eps,g}_{T_{\eps,\delta}}\right)  + 1\\
\Psi^{\rm KM}_\delta(\eps,g)&:=P^{\eps,g}_{B_{\eps,g,\delta}}.
\end{align*}

Let $\delta \in (0,1)$ be such that for all $n$, $t_n \leq \frac{2-\delta}{L+1}$.

Let $\eps > 0$ and $g: \N \to \N$. Then there is an $N \leq \Psi^{\rm KM}_\delta(\eps,g)$ such that for all $i$, $j \in [N, N+ g(N)]$, $|x_i - x_j| \leq \eps$.
\end{theorem}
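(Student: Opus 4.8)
The plan is a proof by contradiction that interleaves two mechanisms: the Finite Monotone Convergence Principle (Proposition~\ref{fmcp}) on stretches where $(x_n)$ runs monotonically, and the geometric contraction of Lemma~\ref{dl3} across the ``switches'' of the sign of $f(x_n)-x_n$. Throughout, abbreviate $T:=T_{\eps,\delta}$, $B:=B_{\eps,g,\delta}$, $P_n:=P^{\eps,g}_n$ and $\Psi:=\Psi^{\rm KM}_\delta(\eps,g)=P_B$, and let $(\sigma_n)$, $(q_n)$ be the sign and switching sequences for $(x_n)$ relative to $f$. One may assume $\eps<1$ (otherwise $N:=0$ works, since all $x_i\in[0,1]$). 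Suppose for contradiction that for every $N\le\Psi$ there are $i,j\in[N,N+g(N)]$ with $|x_i-x_j|>\eps$; then necessarily $g(N)>0$ for all such $N$, so, as $P_{n+1}-P_n=\wt{h^g_{P_n}}^{(\lceil1/\eps\rceil+1)}(0)\ge h^g_{P_n}(0)=g(P_n)$, the chain $P_0<P_1<\cdots<P_B$ is strictly increasing.

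The first step is to show that a sign change occurs inside every block $[P_n,P_{n+1}]$ for $0\le n<B$. If the finite sequence $(x_k)_{k=P_n}^{P_{n+1}}\subseteq[0,1]$, which has length $\wt{h^g_{P_n}}^{(\lceil1/\eps\rceil+1)}(0)$, were monotone, then Proposition~\ref{fmcp} applied with the function $h^g_{P_n}$ would give $N'\le\wt{h^g_{P_n}}^{(\lceil1/\eps\rceil)}(0)$ with $N'+h^g_{P_n}(N')\le P_{n+1}-P_n$ and $|x_{P_n+i}-x_{P_n+j}|\le\eps$ for all $i,j\in[N',N'+h^g_{P_n}(N')]$; translating (and using $h^g_{P_n}(N')=g(P_n+N')$), the point $N:=P_n+N'\le P_{n+1}\le\Psi$ would contradict the assumption. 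So $(x_k)_{k=P_n}^{P_{n+1}}$ is not monotone, hence it strictly increases over one consecutive pair and strictly decreases over another, which by the definition of $(\sigma_n)$ yields an index $m$ with $P_n<m<P_{n+1}$ and $\sigma_m\neq\sigma_{m+1}$; since $(\sigma_n)$ is constant on each block $[q_r+1,q_{r+1}]$, such $m$ is a switching index. As $P$ is strictly increasing, the open intervals $(P_n,P_{n+1})$ ($0\le n<B$) are pairwise disjoint, so we obtain $B$ distinct switching indices; hence, writing $q_1<q_2<\cdots$ for the increasing enumeration of all switching indices, $q_n<P_n<\infty$ for all $1\le n\le B$, and in particular $q_r\ge r$ for these $r$.

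Next set $a_r:=|x_{q_r-1}-x_{q_r}|$ for $1\le r\le B$. For each $r$ with $1\le r\le T-1$, Lemma~\ref{dl3}(\ref{i2}) applies (its hypotheses hold: $q_{r+1}<\infty$ by the previous step, and $t_n\le(2-\delta)/(L+1)$ for all $n$), giving $a_{r+1}\le(1-\frac\delta2)a_r$; since $a_1\le1$, this yields $a_T\le(1-\frac\delta2)^{T-1}\le\eps$, the last inequality being the defining feature of $T=T_{\eps,\delta}$ (as $0<1-\frac\delta2<1$ and $T-1=\lceil\log_{1-\delta/2}\eps\rceil\ge\log_{1-\delta/2}\eps$). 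Now let $K$ be the closed interval with endpoints $x_{q_T-1}$ and $x_{q_T}$, of length $a_T\le\eps$. By induction on $r\in\{T,\ldots,B-1\}$, using Lemma~\ref{dl3}(\ref{i1}) at level $r$, one checks that $x_n\in K$ for all $n\in[q_r,q_{r+1}]$: the base case is immediate, and in the step both $x_{q_{r+1}-1}$ and $x_{q_{r+1}}$ already lie in $K$, so the interval between them — which by Lemma~\ref{dl3}(\ref{i1}) contains every $x_n$ with $n\in[q_{r+1},q_{r+2}]$ — lies in $K$. Hence $x_n\in K$ for all $n\in[q_T,q_B]$. Finally, $q_T<P_T$ while $q_B\ge B=T+\wt{g}(P_T)+1>P_T+g(P_T)$, so the window $[P_T,P_T+g(P_T)]$ is contained in $[q_T,q_B]$; therefore $N:=P_T\le P_B=\Psi$ satisfies $|x_i-x_j|\le a_T\le\eps$ for all $i,j\in[N,N+g(N)]$, contradicting the assumption and completing the proof.

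The delicate point is the calibration of $B_{\eps,g,\delta}$: one needs at least $T_{\eps,\delta}$ blocks to force $T_{\eps,\delta}$ switches (which is what shrinks the confining interval $K$ below $\eps$), but one also needs enough further switches that the final window $[P_{T_{\eps,\delta}},P_{T_{\eps,\delta}}+g(P_{T_{\eps,\delta}})]$ — which cannot be anchored at $q_{T_{\eps,\delta}}$ itself, as $g$ need not be monotone — still stays below the $B_{\eps,g,\delta}$-th switch; this is exactly what the extra summand $\wt{g}(P_{T_{\eps,\delta}})+1$ provides. Getting the switching indices and the $P_n$'s to line up so that all of these estimates hold simultaneously is where most of the bookkeeping goes.
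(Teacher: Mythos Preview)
Your proof is correct and follows essentially the same approach as the paper's. The only cosmetic difference is packaging: the paper argues directly via a two-case split (Case~I: some $q_r>P_r$, then the block $[P_{r-1},P_r]$ lies inside a monotone stretch and Proposition~\ref{fmcp} supplies $N$; Case~II: all $q_r\le P_r$, then Lemma~\ref{dl3} traps the sequence in a shrinking interval), whereas you wrap the same dichotomy in a proof by contradiction, first showing that each block $[P_n,P_{n+1}]$ must fail to be monotone (else Proposition~\ref{fmcp} gives a good $N$) and hence contains a switching index, which yields $q_n<P_n$ for $1\le n\le B$ and reduces to the paper's Case~II verbatim.
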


\begin{proof}
We may now drop $\eps$, $g$, $\delta$ where they show up as indices or arguments. It is immediate that:
\begin{itemize}
\item for all $n$, $P_n \leq P_{n+1}$;
\item $\left(1-\frac\delta2\right)^{T-1} \leq\eps$.
\end{itemize}
Let $(q_n)$ be the switching sequence for $(x_n)$ relative to $f$. Note that $(q_n)$ is strictly increasing and for all $r$, $r \leq q_r$ and if $q_r < \infty$, then $(x_n)_{n\in[q_r,q_{r+1})}$ is monotone. We distinguish two cases.\\[2mm]

{\bf Case I.} There is an $r \leq B$ with $q_r > P_r =P_{r-1} + \wt{h_{P_{r-1}}}^{\left(\left\lceil\frac1\eps\right\rceil+1\right)}(0)$.\\[1mm]

Take $r$ to be minimal with this property. Clearly, $r \geq 1$ and $q_{r-1} \leq P_{r-1}$, so
$$(x_{P_{r-1}+i})_{i=0}^{ \wt{h_{P_{r-1}}}^{\left(\left\lceil\frac1\eps\right\rceil+1\right)}(0)}$$
is a subsequence of $(x_n)_{n\in[q_{r-1},q_r)}$ and is thus monotone. By Proposition~\ref{fmcp}, there is an $N' \leq \wt{h_{P_{r-1}}}^{\left(\left\lceil\frac1\eps\right\rceil\right)}(0)$ such that for all $i$, $j \in [P_{r-1} + N', P_{r-1} + N'+h_{P_{r-1}}(N')]$, $|x_i - x_j| \leq \eps$.

Put $N:=P_{r-1}+N'$. Then
$$N \leq P_{r-1}+ \wt{h_{P_{r-1}}}^{\left(\left\lceil\frac1\eps\right\rceil\right)}(0) \leq P_{r-1}+ \wt{h_{P_{r-1}}}^{\left(\left\lceil\frac1\eps\right\rceil + 1\right)}(0) = P_r \leq P_B = \Psi^{\rm KM}.$$
In addition,
$$h_{P_{r-1}}(N') = g(P_{r-1}+N') = g(N),$$
so for all $i$, $j \in [N, N+ g(N)]$, $|x_i - x_j| \leq \eps$.\\[2mm]

{\bf Case II.} For all $r \leq B$, $q_r \leq P_r$.\\[1mm]

We first show that for all $r \in [1,B-1]$ and all $n \in [q_r,q_B]$, $x_n$ is between $x_{q_r-1}$ and $x_{q_r}$. Let $r \in [1,B-1]$. We prove that for all $s \in [r,B-1]$ and all $n \in [q_s,q_{s+1}]$, $x_n$ is between $x_{q_r-1}$ and $x_{q_r}$. If $s=r$, this follows immediately from Lemma~\ref{dl3}.(\ref{i1}). Now let $s \geq r+1$. By the induction hypothesis, for all $m \in [q_{s-1},q_s]$, $x_m$ is between $x_{q_r-1}$ and $x_{q_r}$ -- in particular, $x_{q_s-1}$ and $x_{q_s}$ are. By Lemma~\ref{dl3}.(\ref{i1}), $x_n$ is between $x_{q_s-1}$ and $x_{q_s}$, thus also between $x_{q_r-1}$ and $x_{q_r}$.

By Lemma~\ref{dl3}.(\ref{i2}), we get that for all $r \in [1,B-1]$, $|x_{q_{r+1}-1} - x_{q_{r+1}}|\leq \left(1-\frac\delta2\right)|x_{q_r-1} - x_{q_r}|$ and thus, by an easy induction, for all $r \in [1,B-1]$, $|x_{q_r-1} - x_{q_r}| \leq \left(1-\frac\delta2\right)^{r-1}$. Combining this with the result in the previous paragraph, we get that for all $r \in [1,B-1]$ and all $i$, $j \in [q_r,q_B]$, $|x_i-x_j| \leq \left(1-\frac\delta2\right)^{r-1}$.

Since $T \leq B-1$, for all $i$, $j \in [q_T,q_B]$, $|x_i-x_j| \leq \eps$. Take $N:=P_T \leq P_B = \Psi^{\rm KM}$. Then on one hand $N=P_T \geq q_T$ (since $T \leq B$) and on the other $N+ g(N) = \wt{g}(P_T) \leq T + \wt{g}(P_T) + 1 = B \leq q_B$, so $[N,N+g(N)] \subseteq [q_T,q_B]$, hence for all $i$, $j \in [N, N+ g(N)]$, $|x_i - x_j| \leq \eps$.
\end{proof}

An examination of the above proof shows that the only properties that are needed about the iterative sequence are contained in Lemma~\ref{dl3}. In \cite{DD92}, it is shown that one can impose certain conditions on the parameters of the Ishikawa iteration such that those same properties hold and thus the rate extracted above remains valid.

\section{Acknowledgements}

This work has been supported by the German Science Foundation (DFG Project KO 1737/6-1) and by a grant of the Romanian National Authority for Scientific Research, CNCS - UEFISCDI, project number PN-III-P1-1.1-PD-2019-0396.

\end{document}